\newtheorem{lemma}{Lemma}
\newtheorem{example}{Example}
\newtheorem{theorem}{Theorem}
\newtheorem{corollary}{Corollary}
\newtheorem{definition}{Definition}
\DeclareMathOperator{\bin}{bin}
\DeclareMathOperator{\m}{mod}
\title{A Boolean Function Theoretic Approach to Quantum Hypergraph States and Entanglement}
\author{Supriyo Dutta \\ Department of Mathematics\\ National Institute of Technology Agartala 
	\\Jirania, West Tripura, India - 799046 
	\\ Email: \texttt{dosupriyo@gmail.com}}
\date{} 
\begin{document}
	\maketitle 
	
	\begin{abstract}
		The hypergraph states are pure multipartite quantum states corresponding to a hypergraph. It is an equal superposition of the states belonging to the computational basis. Given any hypergraph, we can construct a hypergraph state determined by a Boolean function. In contrast, we can find a hypergraph, corresponding to a Boolean function. This investigation develops a number of combinatorial structures concerned with the hypergraph states. For instance, the elements of the computational basis generate a lattice. The chains and antichains in this lattice assist us to find the equation of the Boolean function explicitly as well as to find a hypergraph. In addition, we investigate the entanglement property of the hypergraph states in terms of their combinatorial structures. We demonstrate several classes of hypergraphs, such that every cut of equal length on the corresponding hypergraph states has an equal amount of entanglement.
		
		\noindent{\bf Keywords:} Quantum hypergraph state, Lattice, Boolean function, Entanglement, Partial transpose.
	\end{abstract}

	\section{Introduction}
	
		Quantum information is a rapidly expanding field of research due to its theoretical successes in super-dense quantum coding \cite{bennett1992communication, nielsen2002quantum}, teleportation \cite{bennett1993teleporting}, quantum error correction \cite{devitt2013quantum}, and many more \cite{mcmahon2007quantum, wilde2013quantum, benenti2007principles}. Most of these protocols utilize entanglement in quantum states as a resource \cite{horodecki2009quantum}. Multipartite pure quantum states expand the applicability of these schema. The quantum hypergraph states \cite{qu2013encoding, rossi2013quantum} which can be considered as a generalization of graph state \cite{hein2004multiparty, anders2006fast, van2005local} or cluster states \cite{briegel2001persistent, nielsen2006cluster} is an important class of multipartite, pure, entangled quantum states. They are equal superposition of all possible states in the computational basis of a given order.
		
		The quantum hypergraph states are spread into non-unitary equivalent classes \cite{qu2013multipartite}. They have crucial connections with stabilizer groups in the theory of quantum error correcting codes \cite{wagner2018analysis}. There is a relation between quantum hypergraph states and the local Pauli group \cite{rossi2013quantum}. Quantum entanglement is used as a resource in quantum information and computation. The quantum information theoretic community has a deep interest in the nature of entanglement in quantum hypergraph states \cite{dutta2019permutation, zhou2022entanglement, amouzou2022entanglement, sarkar2022geometry}. Entanglement in continuous variable hypergraph states is studied in \cite{moore2019quantum}. A projective multipartite entanglement witness is introduced in \cite{ghio2017multipartite}. It can be proved that all quantum hypergraph states are maximally entangled \cite{qu2013relationship}. An entropic measure of entanglement is studied for quantum hypergraph states in \cite{qu2014entropic}. A number of non-classical properties of quantum hypergraph states are illustrated in \cite{guhne2014entanglement}. They are also utilized in different applications in quantum information and computation \cite{zhu2019efficient, xiong2018qudit, toth2005detecting, takeuchi2019quantum, takeuchi2018verification, gachechiladze2019changing, tao2022verification, banerjee2020quantum}.
		
		Although a number of research works have been published in recent years, the combinatorial facets of these states are not ``well-understood" to date. These states have a number of beautiful combinatorial characteristics, which are essential in studying the entanglement of quantum hypergraph states. A hypergraph state is determined by a Boolean function. The Boolean functions are necessary for classical and quantum information \cite{galindo2002information}. To the best of our knowledge, this is the first study to make a powerful connection between the theory of hypergraphs, Boolean functions, and quantum states. We mention a constructive procedure to generate the Boolean function when a hypergraph state is given, as well as the hypergraph when a Boolean function is given. These combinatorial properties are useful in studying entanglement in hypergraph states. We demonstrate a number of hypergraph states with equal entanglement for all possible bipartitions.
		
		The article is divided into four sections. In Section 2, we present the necessary ideas to read this article, which are the poset and lattice, hypergraphs, and multipartite entanglement. Here we justify that there is a one-to-one correspondence between the Boolean functions and hypergraph states. Section 3 is dedicated to a study of combinatorial properties and entanglement in quantum hypergraph states. In this section, we construct a number of quantum hypergraph states with equal entanglement in all possible bipartitions. Then, we conclude this article.

	\section{Preliminary}
		
		In this section, we discuss a few mathematical concepts which are necessary to elaborate the rest of the article. A set $X$ is a well-defined collection of district objects. The Cartesian product of two sets $X$ and $Y$ is a new set $X \times Y = \{(x, y); x \in X, y \in Y\}$. A relation $R$ on a set $X$ is a subset of the Cartesian product $X \times X$. A relation $R$ is reflexive if $(x, x) \in R$ for all $x \in X$. It is transitive if for $(x, y) \in R$ and $(y, z) \in R$ we have $(x, z) \in R$. It is anti-symmetric if $(x, y) \in R$ and $(y, x) \in R$ indicate $x = y$. The relation $R$ is a partial order relation if it is reflexive, antisymmetric and transitive. A poset is a combination of a set $X$ and a partial order relation $\subset$ defined on it. A maximal element of a poset is an element $x$ such that if $x \subset y$, then $y = x$. In a poset, $z$ is a lower bound of $x$ and $y$ if $z \subset x$ and $z \subset y$. A greatest lower bound of $x$ and $y$ is a maximal element of the set of lower bounds. A lattice is a poset in which each pair of elements has a unique greatest lower bound and a unique least upper bound. A chain is a subset of a poset such that any two elements are related. \cite{crawley1973algebraic, lidl2012applied}
		
		A hypergraph $G = (V(G), E(G))$ consists of a set of vertices $V(G) = \{1, 2, \dots n\}$ and a set of hyperedges $E(G) = \{e : e \subset V(G)\}$ \cite{bretto2013hypergraph}. The  non-empty hyperedge $e = (v_1, v_2, \dots v_{|e|})$ contains vertices $v_1, v_2, \dots v_{|e|}$. A $k$-graph is a hypergraph, such that every hyperedge contains exactly $k$ vertices. A complete $k$-graph contains all possible hyperedges with $k$ vertices. The vertex-edge incidence matrix $M = (m_{v,e})_{n \times m}$ of $G$ with $n$ vertices and $m$ hyperedges is given by $m_{v, e} = 1$ if  $v \in e$ and $m_{v, e} = 0$ otherwise. Clearly, $\sum_{v = 1}^n m_{v, e} = |e|$.  The minimum number of vertices in a hyperedge is the co-rank of a hypergraph. 
		
		Now we are in a position to introduce quantum hypergraph states. Any integer $i \in [2^n] = \{0, 1, 2,\dots, $ $(2^n - 1)\}$ has an $n$-term binary representation $\bin(i) = (i_1, i_2, \dots i_n)$ where $i_j \in \{0, 1\}$ and $j = 1, 2, \dots n$.  A normalised vector in two-dimensional Hilbert space $\mathcal{H}_2$ is a qubit. Denote the vectors $\ket{0} = \begin{bmatrix}1 \\ 0 \end{bmatrix}, \ket{1} = \begin{bmatrix}0 \\ 1 \end{bmatrix}$ and $\ket{+} = \frac{1}{\sqrt{2}}(\ket{0} + \ket{1}) = \frac{1}{\sqrt{2}}\begin{bmatrix}1 \\ 1 \end{bmatrix}$. The Pauli $Z$ operator is given by $Z = \begin{bmatrix} 1 & 0 \\ 0 & -1 \end{bmatrix}$, such that $Z\ket{0} = \ket{0}$ and $Z\ket{1} = -\ket{1}$. An $n$-qubit state is a normalised vector in $\mathcal{H}_2^{\otimes n} = \mathcal{H}_2 \otimes \dots \otimes \mathcal{H}_2 (n$-times). The set $\mathcal{B} = \{\ket{\bin(i)} = \ket{i_1}\otimes \ket{i_2} \otimes \dots \otimes \ket{i_n}: i \in [2^n]\}$ forms the computational basis of $\mathcal{H}^{\otimes n}$. Note that, $\ket{+}^{\otimes n} = \ket{+} \otimes \ket{+} \otimes \dots \otimes \ket{+} = \frac{1}{\sqrt{2^n}} \sum_{i \in [2^n]}\ket{\bin(i)}$.
		
		There is a one-to-one correspondence between the elements in $\bin(i)$ for any $i \in [2^n]$ and $V(G)$. Hence, we can write $\ket{\bin(i)} = \ket{i_{v_1}} \otimes \ket{i_{v_2}} \otimes \dots \otimes \ket{i_{v_n}}$ , where $i_{v_i} \in \{0, 1\}$. Corresponding to a hyperedge $e = (v_1, v_2, \dots v_{|e|})$ define a controlled-$Z$ operator $Z_e$ acting on $\mathcal{H}_2^{\otimes n}$, such that
		\begin{equation}\label{hyperedge_operator}
			Z_e \ket{\bin(i)} = \begin{cases} - \ket{\bin(i)} & ~\text{if}~ i_{v_j} = 1 ~\text{for all}~ v_j \in e; \\ \ket{\bin(i)} & ~\text{if}~ i_{v_j} = 0 ~\text{for any}~ v_j \in e. \end{cases}
		\end{equation}
		Clearly, $\ket{\bin(i)}$ are the eigenvectors of $Z_e$ with eigenvalues $\pm 1$ depending on $e$. Therefore, with respect to the basis $\mathcal{B}$, $Z_e$ is a diagonal matrix of order $2^n$ whose diagonal entries are $1$, or $-1$. Also $Z_e Z_e^\dagger = I$. Hence, the set of all controlled-Z operators $\{Z_e: e \in E(G)\}$ forms a family of commuting normal matrices. Now corresponding to a hypergraph $G$ there is a unique operator $\mathcal{U} = \prod_{e \in E(G)}Z_e$, which leads us to the definition of quantum hypergraph states.
		\begin{definition}
			Given any hypergraph $G$ with $n$ vertices, there is a unique $n$-qubit quantum state, called hypergraph state, which is denoted by $\ket{G}$ and defined by $\ket{G} = \mathcal{U} \ket{+}^{(\otimes n)}$.
		\end{definition}
		Equation (\ref{hyperedge_operator}) suggests that $\mathcal{U}$ only alters the sign of $\ket{\bin(i)}$ depending on $E(G)$. Now, expanding $\ket{G}$ we get 
		\begin{equation}\label{hypergraph_state}
			\ket{G} = \mathcal{U} \ket{+}^{\otimes n} = \frac{1}{\sqrt{2^n}} \sum_{i \in [2^n]} \mathcal{U} \ket{\bin(i)} = \frac{1}{\sqrt{2^n}}\sum_{i \in [2^n]} (-1)^{f(\bin(i))} \ket{\bin(i)},
		\end{equation}
		where $f:\{0, 1\}^{\times n} \rightarrow \{0, 1\}$ is a Boolean function depending on $G$. Recall that, a Boolean function with $n$ variables is a function $f: \{0, 1\}^n \rightarrow \{0, 1\}$ \cite{crama2011boolean}. Note that the expression of $\ket{G}$ contains all the elements of $\mathcal{B}$, with coefficients $\pm 1$. As $\mathcal{B}$ contains $2^n$ elements there are $2^{2^n}$ different hypergraph states $\ket{G}$. Also, the number of all possible Boolean functions is $2^{2^n}$. Hence, there is a one-to-one correspondence between the set of hypergraph states and the set of all Boolean functions with $n$ variables. We can write it as the following lemma:
		
		\begin{lemma}
			Any Boolean function over $n$ variables corresponds to an $n$-qubit hypergraph state.
		\end{lemma}
		
		A hypergraph state can also be represented by its density matrix
		\begin{equation}\label{density_matrix}
			\rho_G = \ket{G}\bra{G} = \frac{1}{2^n} \sum_{i \in [2^n]} \sum_{j \in [2^n]} (-1)^{f(\bin(i)) + f(\bin(j))} \ket{i_1 i_2 \dots i_n} \bra{j_1 j_2 \dots j_n}.
		\end{equation}
		Therefore, if $\rho_G = (\rho_{i,j})_{2^n \times 2^n}$, then $\rho_{i,j} = (-1)^{f(\bin(i)) + f(\bin(j))}$. 
		
		In this work, we also investigate the separability properties of hypergraph states. A cut set $T = \{k_1, \dots k_m: m < n\}$ is a set of indices, where, $m$ is called the length of $T$. All the qubits in $\ket{\bin(i)}$ with indices in $T$ belongs to the Hilbert space $\mathcal{H}_2^{(T)}$. Similarly, the qubits in $\ket{\bin(i)}$ whose indices are not is $T$ belongs to $\mathcal{H}_2^{(\overline{T})}$. Clearly, $\mathcal{H}_2^n = \mathcal{H}_2^{(T)} \otimes \mathcal{H}_2^{(\overline{T})}$. Depending on $T$ we can define a permutation $P$ such that $P(k_i) = i$. Therefore, given any set of indices $T$ the permutation $P$ maps the qubits corresponding to $T$ of $\ket{\bin(i)}$ to the set of qubits corresponding to $\mathcal{I} = \{1, \dots m: m \le n\}$. We can express $\ket{\bin(i)} = \bigotimes_{v \in V_T} \ket{i_v} \otimes \bigotimes_{v \in V_{\overline{T}}} \ket{i_v} = P \left[ \bigotimes_{j = 0}^{m - 1} \ket{i_j} \otimes \bigotimes_{k = m}^n \ket{i_k} \right]$. 
		
		A quantum state $\ket{\psi}$ is separable with respect to the cut set $T$ if $\ket{\psi} = \ket{\psi_1} \otimes \ket{\psi_2}$ where $\ket{\psi_1} \in \mathcal{H}_2^{(T)}$ and $\ket{\psi_2} \in \mathcal{H}_2^{(\overline{T})}$. Otherwise $\ket{\psi}$ is entangled. A fully entangled state is entangled for all possible cut sets. The partial transpose with respect to the cut set $T$ on $\rho_G$ is given by,
		\begin{equation}\label{partial_transpose}
			\begin{split} 
				\rho^{\tau_T}_G = \frac{1}{2^n} \sum_{i \in [2^n]} \sum_{i \in [2^n]} (-1)^{f(\bin(i)) + f(\bin(j))} & \ket{i_1 \dots i_{k_1 - 1} j_{k_1} i_{k_1 + 1} \dots i_{k_{m} - 1} j_{k_m} i_{k_m + 1} \dots i_n}\\
				& \bra{j_1 \dots j_{k_1 - 1} i_{k_1} j_{k_1 + 1} \dots j_{k_m - 1} i_{k_m} j_{k_m + 1} \dots j_n}.
			\end{split} 
		\end{equation}

	\section{Hypergraph quantum states, Boolean functions, and Entanglement} 
		
		In this section, first, we represent the function $f(\bin(i))$ in terms of the combinatorial structures in the hypergraph $G$. Then we elaborate on how entanglement depends on hypergraphs. In the end, we discuss a few hypergraphs having equal amount of entanglement with respect to every cut. We begin with the following lemma illustrating the relation between hyperedges and $\ket{\bin(i)}$.
		
		\begin{lemma}
			For any hyperedge $e$ there are $2^{n - |e|}$ integers $i \in [2^n]$, such that, $Z_e \ket{\bin(i)} = -\ket{\bin(i)}$.
		\end{lemma}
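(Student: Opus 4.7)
The plan is to reduce the statement to a straightforward counting argument on binary strings of length $n$, using directly the definition of $Z_e$ given in equation~(\ref{hyperedge_operator}).

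First I would translate the condition $Z_e \ket{\bin(i)} = -\ket{\bin(i)}$ into a condition on the binary tuple $\bin(i) = (i_{v_1}, i_{v_2}, \dots, i_{v_n})$. By the defining case in~(\ref{hyperedge_operator}), this eigenvalue equation holds precisely when $i_{v_j} = 1$ for every vertex $v_j \in e$; in the complementary case (some $i_{v_j} = 0$ for $v_j \in e$) the eigenvalue is $+1$. So the count of interest equals the number of binary strings of length $n$ whose coordinates indexed by $e$ are all fixed to $1$.

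Next, I would split the $n$ coordinate positions of $\bin(i)$ into two disjoint blocks: the $|e|$ positions corresponding to vertices in $e$, and the remaining $n - |e|$ positions corresponding to $V(G) \setminus e$. The positions in the first block are forced to take the value $1$ (a single choice), while the positions in the second block may take any value in $\{0, 1\}$ independently. Applying the product rule to these independent choices yields $1^{|e|} \cdot 2^{n - |e|} = 2^{n - |e|}$ admissible strings.

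Finally, because $i \mapsto \bin(i)$ is a bijection between $[2^n]$ and $\{0, 1\}^{\times n}$, the count of admissible strings equals the count of admissible integers $i$, completing the argument. There is no real obstacle here; the only point that requires care is confirming that the two cases in~(\ref{hyperedge_operator}) are exhaustive and mutually exclusive, so that no integer is double-counted or omitted. This is immediate from the bivalence of each $i_{v_j} \in \{0, 1\}$.
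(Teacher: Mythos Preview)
Your proposal is correct and follows essentially the same approach as the paper: both translate the eigenvalue condition via equation~(\ref{hyperedge_operator}) into fixing the $|e|$ coordinates indexed by $e$ to $1$ and counting the $2^{n-|e|}$ free choices on the remaining positions. Your version is slightly more explicit about the bijection $i \leftrightarrow \bin(i)$ and the exhaustiveness of the two cases, but these are cosmetic additions rather than a different route.
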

		
		\begin{proof}
			Consider a hyperedge $e = (v_1, v_2, \dots v_{|e|})$. Equation (\ref{hyperedge_operator}) indicates that $Z_e \ket{\bin(i)} = - \ket{\bin(i)}$ holds if $i_{v_1} = i_{v_2}= \dots = i_{v_{|e|}} = 1$. Remaining $n - |e|$ elements of $\bin(i)$ may be selected in $2^{n - |e|}$ different ways which are either $0$ or $1$. Hence, given any hyperedge $e$, there are $2^{n - |e|}$ integers $i \in [2^n]$, such that $Z_e \ket{\bin(i)} = -\ket{\bin(i)}$.
		\end{proof}
	
		For an integer $i \in [2^n]$, we define \textit{a set of vertices} $\mathcal{O}(i) = \{v: i_v = 1, i_v \in \bin(i)\}$. Clearly, $\mathcal{O}(0) = \emptyset$ and $\mathcal{O}(2^n - 1) = V(G)$. Also, for any hyperedge $e$ there exists a unique integer $i$, such that $e = \mathcal{O}(i)$. Define a binary relation $\subset$ on the set $[2^n]$, such that $i \subset j$ if $\mathcal{O}(i) \subset \mathcal{O}(j)$, which is a partial order relation. Note that, $([2^n], \subset)$ is an algebraic lattice. We also define \textit{a set of hyperedges} $E_i = \{e: e \in E(G), e \subset \mathcal{O}(i)\}$. Note that, for a hyperedge $e = (v_1, v_2, \dots v_{|e|})$ there is an integer $k$, such that $\mathcal{O}(k) = e$. The non-empty set of integers $(e) = \{i: e \subset \mathcal{O}(i)\}$ corresponds a chain under the partial order relation $\subset$, with infimum $|e|$ and supremum $(2^n - 1)$. In the following theorem, we write the function $f(\bin(i))$ in terms of $E_i$.
		
		\begin{theorem} \label{counting_1}
			Let $f: \{0, 1\}^{\times n} \rightarrow \{0, 1\}$ be the Boolean function determining the quantum hypergraph state $\ket{G}$. Then, $f(\bin(i)) = |E_i|(\m 2)$.
		\end{theorem}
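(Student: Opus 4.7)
The plan is to unpack the action of $\mathcal{U} = \prod_{e \in E(G)} Z_e$ on each computational basis vector $\ket{\bin(i)}$, count the number of factors that contribute a sign flip, and then match the resulting scalar with the $(-1)^{f(\bin(i))}$ coefficient in expansion (\ref{hypergraph_state}).

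First I would start from
\begin{equation}
\ket{G} = \mathcal{U}\ket{+}^{\otimes n} = \frac{1}{\sqrt{2^n}} \sum_{i \in [2^n]} \mathcal{U}\ket{\bin(i)}
\end{equation}
and use the fact, already established before the theorem statement, that the operators $\{Z_e : e \in E(G)\}$ are mutually commuting diagonal matrices, so $\mathcal{U}\ket{\bin(i)}$ can be evaluated by applying the $Z_e$ in any order and multiplying together the scalars each produces. By (\ref{hyperedge_operator}), $Z_e \ket{\bin(i)} = -\ket{\bin(i)}$ precisely when $i_{v_j} = 1$ for every $v_j \in e$, and equals $+\ket{\bin(i)}$ otherwise.

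Next I would translate the condition ``$i_{v_j} = 1$ for every $v_j \in e$'' into the language of the set $\mathcal{O}(i) = \{v : i_v = 1\}$ introduced just before the theorem. By the very definition of $\mathcal{O}(i)$, this condition is equivalent to $e \subset \mathcal{O}(i)$, i.e.\ to $e \in E_i$. Hence
\begin{equation}
\mathcal{U}\ket{\bin(i)} = \prod_{e \in E(G)} \bigl(Z_e \ket{\bin(i)}\bigr) \cdot \ket{\bin(i)}^{-1} \cdot \ket{\bin(i)} = (-1)^{|E_i|}\ket{\bin(i)},
\end{equation}
since exactly $|E_i|$ of the factors contribute a $-1$ and the remaining factors contribute $+1$.

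Finally I would compare coefficient by coefficient with (\ref{hypergraph_state}): for every $i \in [2^n]$ we must have $(-1)^{f(\bin(i))} = (-1)^{|E_i|}$, which is equivalent to $f(\bin(i)) \equiv |E_i| \pmod{2}$. Since $f$ takes values in $\{0,1\}$, this gives $f(\bin(i)) = |E_i| \,(\m 2)$, as claimed. I do not expect any genuine obstacle here; the only thing worth checking carefully is the translation of the ``all $v_j \in e$ satisfy $i_{v_j}=1$'' condition into $e \subset \mathcal{O}(i)$, and the observation that commutativity of the $Z_e$ is what allows the sign contributions to be aggregated multiplicatively without worrying about ordering.
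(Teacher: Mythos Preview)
Your argument is correct and follows exactly the paper's approach: translate the sign-flip condition of (\ref{hyperedge_operator}) into $e \subset \mathcal{O}(i)$, observe that only the hyperedges in $E_i$ flip the sign when $\mathcal{U}$ is applied, and read off the parity. One notational caveat: the displayed expression $\prod_{e}(Z_e\ket{\bin(i)})\cdot\ket{\bin(i)}^{-1}\cdot\ket{\bin(i)}$ is ill-formed (kets cannot be multiplied or inverted); write instead $Z_e\ket{\bin(i)} = \lambda_e(i)\ket{\bin(i)}$ with $\lambda_e(i)\in\{\pm 1\}$ and then $\mathcal{U}\ket{\bin(i)} = \bigl(\prod_{e\in E(G)}\lambda_e(i)\bigr)\ket{\bin(i)} = (-1)^{|E_i|}\ket{\bin(i)}$.
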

		
		\begin{proof} 
			Recall that, given any integer $i \in [2^n]$ and a hyperedge $e$, $Z_e\ket{\bin(i)} = -\ket{\bin(i)}$ holds, if $e \subset \mathcal{O}(i)$, defined above. The operator $\mathcal{U}$ is the product of all operators $Z_e$. Now only the hyperedges belonging to the set $E_i$ alter the sign of $\ket{\bin(i)}$ when we apply $\mathcal{U}$ on $\ket{\bin(i)}$. Therefore, $f(\bin(i)) = |E_i| (\m 2)$. 
		\end{proof}
	
		An alternative expression $f(\bin(i))$ can also be determined by the incidence matrix of the hypergraph as follows:
		
		\begin{theorem}
			Let $M$ be the vertex-edge incidence matrix of a hypergraph $G$ and $k$ be the number of zeros in the vector $M^t(j_n - \bin(i)^t)$, where $j_n$ be the all one column vector of dimension $n$. Then $f(\bin(i)) = k(\m 2)$.
		\end{theorem}	
		
		\begin{proof} 	
			Definition of incidence matrix indicates that $M^tj_n = (|e_1|, |e_2|, \dots |e_m|)^t$. A hyperedge $e = (v_1, v_2, \dots v_{|e|})$ corresponds to the $e$-th row in $M^t$ which has $1$ in the $v_1, v_2, \dots v_{|e|}$-th positions and remaining all elements are zeros. The $e$-th row of $M^tj_n$ is $|e|$. If $e \subset \mathcal{O}(i)$, then $Z_e\ket{\bin(i)} = -\ket{\bin(i)}$. For this $i$, the $e$-th element of $M^t\bin(i)$ is again $|e|$. Hence, the $e$-th element of $M^t(j_n - \bin(i)) = 0$. Now $\mathcal{U}$ is the product of all these controlled-$Z$ operators. The number of hyperedges $e$ such that $Z_e$ alters the sign of $\ket{\bin(i)}$ is $k$, which is the number of zero elements in $M^t(j_n - \bin(i)^t)$. Therefore, $f(\bin(i)) = k(\m 2)$.
		\end{proof}
		
		Now, we study entanglement in $\ket{G}$ in terms of the vertices and hyperedges in $G$. A cut set $T$ also partitions $V(G)$ into two subsets: $V_T = \{v_k: k \in T\}$ and $V_{\overline{T}} = \{v_k: k \notin T\}$. We have the following observation:
		
		\begin{theorem}
			The quantum hypergraph state $\ket{G}$ is entangled with respect to the cut set $T = \{k_0, k_1, \dots k_{(m - 1)}\}$ if there is a hyperedge $\epsilon \in E(G)$, such that, $\epsilon \cap V_T \neq \emptyset$ and $\epsilon \cap V_{\overline{T}} \neq \emptyset$.
		\end{theorem}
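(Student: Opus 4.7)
The approach is to exploit the uniqueness of the algebraic normal form (ANF) of the Boolean function $f$ attached to $\ket{G}$ by Theorem \ref{counting_1}. First I would partition the hyperedges by the cut: set
\begin{equation}
E_T = \{e \in E(G) : e \subset V_T\}, \quad E_{\overline{T}} = \{e \in E(G) : e \subset V_{\overline{T}}\},
\end{equation}
and $E_{cross} = \{e \in E(G) : e \cap V_T \neq \emptyset ~\text{and}~ e \cap V_{\overline{T}} \neq \emptyset\}$. Because the operators $Z_e$ pairwise commute, $\mathcal{U}$ factors as a product of three commuting operators, one per class, and the two non-crossing factors act non-trivially only on their own sub-system. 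I would argue the contrapositive: assume $\ket{G}$ is separable with respect to $T$ and deduce $E_{cross} = \emptyset$.

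The main step is to translate separability into an additive splitting of $f$. Using the isomorphism $\mathbb{C}_2^{\otimes n} \cong \mathbb{C}_2^{(T)} \otimes \mathbb{C}_2^{(\overline{T})}$ furnished by the preceding cut-permutation lemma, separability means there exist $\ket{\psi_1} = \sum_j \alpha_j \ket{\bin(j)}$ and $\ket{\psi_2} = \sum_k \beta_k \ket{\bin(k)}$ with $\ket{G} = \ket{\psi_1} \otimes \ket{\psi_2}$. Comparing coefficients of the bipartite computational basis gives $\alpha_j \beta_k = (-1)^{f_{j,k}} / \sqrt{2^n}$, where $f_{j,k}$ denotes $f(\bin(i))$ for the unique $i$ whose $V_T$- and $V_{\overline{T}}$-blocks are $\bin(j)$ and $\bin(k)$. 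Taking moduli forces $|\alpha_j|$ and $|\beta_k|$ to be globally constant in their indices, hence $2^{-m/2}$ and $2^{-(n-m)/2}$ after normalisation; absorbing a single global phase then pins each $\alpha_j$ and $\beta_k$ to a real $\pm$-sign, producing Boolean functions $g, h$ with
\begin{equation}
f(x_1, \dots, x_n) = g(x_T) + h(x_{\overline{T}}) ~(\m 2),
\end{equation}
where $x_T, x_{\overline{T}}$ are the variable blocks indexed by $V_T, V_{\overline{T}}$.

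The concluding step is an ANF uniqueness argument. The condition $e \subset \mathcal{O}(i)$ in Theorem \ref{counting_1} is exactly $\prod_{v \in e} i_v = 1$, so that theorem can be restated as the explicit multilinear polynomial
\begin{equation}
f(x_1, \dots, x_n) = \sum_{e \in E(G)} \prod_{v \in e} x_v ~(\m 2).
\end{equation}
Distinct hyperedges contribute distinct monomials, so this is already the reduced ANF, whose representation is unique over $\mathbb{F}_2$. Any splitting $g(x_T) + h(x_{\overline{T}})$ has ANF supported only on monomials whose variables lie entirely in $V_T$ or entirely in $V_{\overline{T}}$. A crossing hyperedge $\epsilon \in E_{cross}$ places the mixed monomial $\prod_{v \in \epsilon} x_v$ in the ANF of $f$, which no such splitting can realise; by uniqueness, no splitting exists, contradicting the assumed separability.

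The main obstacle is the middle paragraph: the passage from the pointwise identity $\alpha_j \beta_k \in \{\pm 2^{-n/2}\}$ to the existence of real sign-functions $g, h$ is not automatic, because a product of two complex numbers with prescribed modulus equal to $\pm 1$ does not locally force either factor to be real. The resolution is that the modulus constraint is uniform in both indices, so that after a single global-phase rotation the arguments of $\alpha_j$ and of $\beta_k$ are simultaneously pushed into $\pi \mathbb{Z}$; only then does the clean ANF comparison become available.
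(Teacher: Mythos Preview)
Your proof is correct and takes a genuinely different route from the paper. The paper argues by fixing the single index $i$ with $\mathcal{O}(i)=\epsilon$ and comparing the four coefficients $a_0b_0$, $a_jb_0$, $a_0b_k$, $a_jb_k$ in an assumed factorisation $\ket{G}=\ket{\psi_1}\otimes\ket{\psi_2}$; the multiplicative identity $(a_jb_0)(a_0b_k)=(a_0b_0)(a_jb_k)$ then forces the count $|E_3|$ of crossing edges contained in $\epsilon$ to be even, which is meant to contradict $\epsilon\in E_3$ (strictly speaking this step needs $\epsilon$ chosen inclusion-minimal so that $|E_3|=1$). You instead globalise: from separability you extract an additive splitting $f(x)=g(x_T)+h(x_{\overline{T}})\ (\m 2)$, and then invoke uniqueness of the algebraic normal form, observing that Theorem~\ref{counting_1} already presents $f=\sum_{e\in E(G)}\prod_{v\in e}x_v$ as that ANF, so no mixed monomial can occur. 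Your phase-rigidity step (uniform modulus $\Rightarrow$ a single global rotation makes all $\alpha_j,\beta_k$ real) is handled correctly. The payoff of your approach is that it avoids any parity bookkeeping on $E_3$, sidesteps the minimality issue, and makes the converse (no crossing edge $\Rightarrow$ separable) immediate; the paper's four-coefficient trick is more elementary in that it never names the ANF, but your argument is the more robust of the two.
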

		
		\begin{proof}
			Let $\ket{G}$ be a seperable state with respect to the cut $T$, that is there are $\ket{\psi_1} \in \mathcal{H}^{(T)}$ and $\ket{\psi_2} \in \mathcal{H}^{(\overline{T})}$, such that $\ket{G} = \ket{\psi_1} \otimes \ket{\psi_2}$. In terms of the computational basis
			\begin{equation}
				\ket{\psi_1} = \sum_{j = 0}^{2^m-1} a_j \ket{\bin(j)} ~\text{and}~ \ket{\psi_2} = \sum_{k = 0}^{2^{(n - m)} - 1} b_k \ket{\bin(k)},
			\end{equation}
			where $\sum_{j = 0}^{2^m-1} |a_j|^2 = 1$, and $\sum_{k = 0}^{2^{(n - m)} - 1} |b_k|^2 = 1$. Here, $\bin(j)$ is $m$-term binary representation of $j$, as well as $\bin(k)$ is $(n - m)$ term binary representation of $k$. Consider the integer $i \in [2^n]$, such that $\mathcal{O}(i) = \epsilon$. Now there are $\ket{\bin(j)}$ and $\ket{\bin(k)}$ in $\mathcal{H}^{(T)}$ and $\mathcal{H}^{(\overline{T})}$ satisfying $\ket{\bin(i)} = \ket{\bin(j)}\otimes \ket{\bin(k)}$. Comparing the coefficients of $\ket{G}$ and $\ket{\psi_1} \otimes \ket{\psi_2}$, after expanding them in computation basis we have $a_j b_k = \frac{1}{\sqrt{2^n}}(-1)^{f(\bin(i))}$.
			
			We have seen that every hyperedge $e$ introduces a chain in $([2^n], \subset)$ which is denoted by $(e)$. Theorem \ref{counting_1} suggests that, $f(\bin(i)) = |E_i|(\m 2)$ where $E_i = \{e: e \in E(G), e \subset \mathcal{O}(i)\}$. The cut set $T$ partitions $E_i$ into three classes: $E_1 = \{e: e \subset V_T\}$, $E_2 = \{e: e \subset V_{\overline{T}}\}$, and $E_3 = \{e: e \cap V_T \neq \emptyset, e \cap V_{\overline{T}} \neq \emptyset\}$. Clearly, $|E_i| = |E_1| + |E_2| + |E_3|$. Therefore, $a_j b_k = \frac{1}{\sqrt{2^n}}(-1)^{(|E_1| + |E_2| + |E_3|)}$. 
			
			Considering the coefficients of $\ket{\bin(j)} \otimes \ket{0}^{\otimes (n - m)}$ and $\ket{0}^{\otimes m} \otimes \ket{\bin(k)}$ in both side of the equation $\ket{G} = \ket{\psi_1} \otimes \ket{\psi_2}$, we get $a_jb_0 = \frac{1}{\sqrt{2^n}}f(\bin(j), 0, 0, \dots 0 (n-m)$-times), and $a_0b_k = \frac{1}{\sqrt{2^n}}f(0, 0, \dots 0 (m\text{- times}), \bin(k))$. Observe that, $Z_e$ $\ket{\bin(j), 0, 0, \dots 0} = - \ket{\bin(j), 0, 0, \dots 0}$, if and only if $e \in E_1$. Note that, $f(\bin(j), 0, 0, \dots 0 (n-m)$-times) $ = |E_1|$, and similarly $f(0, 0, \dots 0 (m\text{- times}),$ $ \bin(k)) = |E_2|$. Therefore $a_jb_0 = \frac{1}{\sqrt{2^n}}(-1)^{|E_1|}$, and $a_0b_k = \frac{1}{\sqrt{2^n}}(-1)^{|E_2|}$. The leading term of $\ket{G} = \ket{\psi_1} \otimes \ket{\psi_2}$ indicates that $a_0b_0 = \frac{1}{\sqrt{2^n}}$. Therefore, $a_j b_k = \frac{1}{\sqrt{2^n}}(-1)^{(|E_1| + |E_2|)}$, which contradicts to the existence of $\epsilon$. Therefore $\ket{G}$ is entangled with respect to the cut set $T$.
		\end{proof}
		
		Now, we concentrate on the algebraic lattice $([2^n], \subset)$. We partition $[2^n]$ into $n$ subsets as follows:
		\begin{equation}
			C_k = \{i: |\mathcal{O}(i)| = k\}, ~\text{where}~ k = 0, 1, \dots n.
		\end{equation}
		Note that every $C_k$ forms an antichain in $([2^n], \subset)$ because no two elements in $C_k$ are related by the partial order relation $\subset$. For instance, the elements in $[2^4]$ can be partitioned as $C_0 = \{0\}, C_1 = \{1, 2, 4, 8\}$, $C_2 = \{3, 5, 6, 9, 10, 12\}, C_3 = \{7, 11, 13, 14\}$, and $C_4 = \{15\}$.
	
		\begin{lemma}\label{true_lemma}
			There are ${s \choose k}$ different choices for $i \in C_k$ for any given $j \in C_s$ such that $i \subset j$ where $s > k$.
		\end{lemma}
		
		\begin{proof}
			As $j \in C_s$, $\bin(j)$ contains $s$ number of $1$. Now $i \subset j$ if the $1$s in $\bin(i)$ share same position in $\bin(j)$. The number of $1$ in $\bin(j)$ is $s$. We can select $k$ positions in $\bin(i)$ from $s$ positions in ${s \choose k}$ ways. Therefore, there are ${s \choose k}$ choices of $i \in C_k$ for any given $j \in C_s$
		\end{proof}
	
		Now we consider a few particular hypergraphs and the Boolean functions in their corresponding states. 
		\begin{corollary}\label{only_one}
			Let $G = (V(G), E(G))$ be a hypergraph with $n$ vertices and $E(G) = \{e\}$ where $e = V(G)$, then 
			\begin{equation}
				\ket{G} = \frac{1}{\sqrt{2^n}} \left[\sum_{i = 0}^{2^n - 2} \ket{\bin(i)} - \ket{11 \dots 1} \right].
			\end{equation}
		\end{corollary}
		
		\begin{proof} 
			The hypergraph has only one hyperedge $e = V(G)$. Therefore, $E_i = \emptyset$ for $i = 0, 1, \dots 2^n - 2$ and $E_{2^n - 1} = V(G)$. Hence, $f(\bin(i)) = 1$ for $i = 2^n - 1$; and $f(\bin(i)) = 0$ otherwise. It leads us to the result.
		\end{proof}
			
		\begin{corollary} \label{complete_k_graph}
			Given a complete $k$-graph $G$ with $n$ vertices the Boolean function $f$ is represented by
			\begin{equation}
				f(\bin(i)) = \begin{cases} 0 & ~\text{if}~ i \in C_s, s < k; \\ {{s \choose k} \m 2} & ~\text{if}~ i \in C_s, s \geq k. \end{cases}
			\end{equation}
		\end{corollary}
		
		\begin{proof}
			There is no hyperedge with cardinality less than $k$. Hence, there is no operator $Z_e$, such that $Z_e\ket{\bin(i)} = -\ket{\bin(i)}$, for $|\mathcal{O}(i)| < k$; that is $f(\bin(i)) = 1$ for all $i \in C_s, s < k$. All possible hyperedges of cardinality $k$ are available in $E(G)$. Hence, for any $i \in C_k$, there is an operator $Z_e$, such that $Z_e\ket{\bin(i)} = - \ket{\bin(i)}$. Therefore, $f(\bin(i)) = 1 = {k \choose k}(\m 2)$, for all $i \in C_k$. Also, there is no hyperedge of cardinality greater than $k$. Applying Lemma \ref{true_lemma} we state that for any element $j$ in $C_s, s > k$ there are ${s \choose k}$ elements $i$ in $C_k$, such that $i \subset j$. All these relations correspond to a controlled-Z operator acting on $\ket{\bin(i)}$. Hence, the operator $\mathcal{U}$ generates $-1$ for ${s \choose k}$ times in the coefficient of $\ket{\bin(i)}$. Therefore,  $f(\bin(i)) = {{s \choose k} \m 2}$ for $i \in C_s, s > k$.
		\end{proof}
		
		If $G$ is a complete graph, more precisely a complete $2$-graph with $n$ vertices, then 
		\begin{equation}
			f(\bin(i)) = \begin{cases} 0 & ~\text{if}~ i \in C_s, s < 2; \\ {{s \choose 2} \m 2} & ~\text{if}~ i \in C_s, s \geq 2. \end{cases}
		\end{equation}

		\begin{corollary}\label{complete_k_mixed}
			Let $G$ be a hypergraph such that, the set of hyperedges $E(G) = E_1 \cup E_2 \cup \dots \cup E_p$ where the hypergraph $(V(G), E_s)$ is a complete $k_s$-graph for distinct values of $s = 1, 2, \dots p$ and $0 < k_1 < k_2 \dots < k_p$. Then the Boolean function corresponding to the hypergraph $G$ can be written as 
			$$f(\bin(i)) = \begin{cases} 0 & ~\text{if}~ i \in C_s, s < k_1; \\ {s \choose k_1} \m 2 & ~\text{if}~ i \in C_s, k_1 \leq s < k_2; \\ {s \choose k_1} + {s \choose k_2} \m 2 & ~\text{if}~ i \in C_s, k_2 \leq s < k_3; \\ \vdots \\ {s \choose k_1} + {s \choose k_2} + \dots + {s \choose k_p} \m 2 & ~\text{if}~ i \in C_{k_p}. \end{cases}$$
		\end{corollary}
		
		\begin{proof}
			Using Corollary \ref{complete_k_graph} we can justify that
			$$f(\bin(i)) = \begin{cases} 0 & ~\text{if}~ i \in C_s, s < k_1; \\ {s \choose k_1} \m 2 & ~\text{if}~ i \in C_s, k_1 \leq s < k_2. \end{cases}$$
			An element $i \in C_s$ for $s \geq k_2$ has connections with the elements in $C_{k_1}$ and $C_{k_2}$. Each of these connections introduces a controlled-Z operator on the elements of $C_s$. Therefore, we may conclude that, $f(\bin(i)) = {s \choose k_1} + {s \choose k_2} \m 2 ~\text{if}~ i \in C_s, k_2 \leq s$. Similarly, we can extend it for larger values of $p$.
		\end{proof}
		
		If co-rank of the hypergraph $G$ is $r$, then there is an operator $Z_e$, such that $Z_e\ket{\bin(i)} = -\ket{\bin(i)}$ where $i \in C_r$. The hypergraph does not contain any hyperedge of cardinality less than $r$. Thus, $f(\bin(i)) = 0$ for $i \in C_s$, where $s < r$.
		
		We mentioned that there is a one-to-one correspondence between the set of hypergraph states and the set of the Boolean functions. Therefore, if a Boolean function $f: \{0, 1\}^{\times n} \rightarrow \{0, 1\}$ is given one can construct the corresponding hypergraph $G$. Clearly, $G$ has $n$ vertices. Determining a hyperedge in $E(G)$ is equivalent to identifying one chain $(e)$ in the algebraic lattice $([2^n], \subset)$. The Boolean function $f$ can also be expressed in terms of these chains. If an integer $i$ belongs to an intersection of $k$ different chains, $f(\bin(i)) = k(\m 2)$. The following example will illustrate our approach to determine the hyperedges in $E(G)$.
		
		\begin{example}
			The Horn function is a well-known class of Boolean functions. Consider a Horn function $f(x_1, x_2, x_3) = x_1 \overline{x_2} ~\vee~ x_1 \overline{x_3} ~\vee~ x_2x_3$. The truth table of this function is given below
			\begin{center} 
				\begin{tabular}{|c c c| p{.6cm} |}
					\hline 
					$x_1$ & $x_2$ & $x_3$ & ~ f \\
					\hline 
					$0$ & $0$ & $0$ & ~ $0$ \\
					$0$ & $0$ & $1$ & ~ $0$ \\
					$0$ & $1$ & $0$ & ~ $0$ \\
					$0$ & $1$ & $1$ & ~ $1$ \\
					$1$ & $0$ & $0$ & ~ $1$ \\
					$1$ & $0$ & $1$ & ~ $1$ \\
					$1$ & $1$ & $0$ & ~ $1$ \\
					$1$ & $1$ & $1$ & ~ $1$ \\
					\hline 
				\end{tabular}
			\end{center} 
			Therefore, the corresponding quantum hypergraph state is:
			\begin{equation}
				\ket{G} = \frac{1}{\sqrt{2^n}}\left[\ket{000} + \ket{001} + \ket{010} - \ket{011} - \ket{100} - \ket{101} - \ket{110} - \ket{111}\right].
			\end{equation}
			As $\ket{G}$ is a three qubit state, the hypergraph $G$ has three vertices that is $V(G) = \{v_1, v_2, v_3\}$. We can partition $[2^3]$ into subsets $C_0, C_1, C_2$ and $C_3$, whose entries can be represented in binary form as $\{(0, 0, 0)\}, \{(0, 0, 1), (0, 1, 0), (1, 0, 0)\}$, $\{(0, 1, 1), (1, 0, 1), (1, 1, 0)\}$, and $\{1, 1, 1\}$, respectively. In the expression of $\ket{G}$, the coefficient of $\ket{100}$ has a negative sign. Also, $(1, 0, 0) \in C_1$ and no other element of $C_0$ or $C_1$ has negative coefficient. Note that, $Z_{e_1}\ket{100} = -\ket{100}$ holds if $e_1 = (v_1) \in E(G)$. Elements in the chain $(e_1) = \{i: e_1 \subset \mathcal{O}(i)\}$ in binary form are $\{(1, 0, 1), (1, 1, 0), (1, 1, 1)\}$. Coefficients of $\ket{101}, \ket{110}$, and $\ket{111}$ are negative in $\ket{G}$. Also, the coefficient of $\ket{011}$ is negative but $(0, 1, 1) \notin (e_1)$. Note that, $Z_{e_2}\ket{011} = -\ket{011}$ holds if $e_2 = (v_2, v_3) \in E(G)$. But $(1, 1, 1) \in (e_1)\cap (e_2)$, and $Z_{e_1} Z_{e_2} \ket{111} = \ket{111}$. Therefore, we need one more hyperedge $e_3 = (v_1, v_2, v_3)$ such that $Z_{e_3}\ket{111} = -\ket{111}$. Now, note that $\mathcal{U}\ket{+}^{\otimes 3} = \ket{G}$ where $\mathcal{U} = Z_{e_1} Z_{e_2} Z_{e_3}$. Therefore, $E(G) = \{(v_1), (v_2, v_3), (v_1, v_2, v_3)\}$. The lattice $([2^3], \subset)$ and the chains $(e_1), (e_2), (e_3)$ are depicted in the figure below:
			\begin{center}
				\begin{tikzpicture}[scale = 2]
					\node at (0, 0) {$(0, 0, 0)$};
					\node at (0, 1) {$(0, 1, 0)$};
					\node at (-2, 1) {$(1, 0, 0)$};
					\node at (2, 1) {$(0, 0, 1)$};
					\node at (-2, 2) {$(1, 1, 0)$};
					\node at (0, 2) {$(1, 0, 1)$};
					\node at (2, 2) {$(0, 1, 1)$};
					\node at (0, 3) {$(1, 1, 1)$};
					\node at (-4, 0) {$C_0$};
					\node at (-4, 1) {$C_1$};
					\node at (-4, 2) {$C_2$};
					\node at (-4, 3) {$C_3$};
					\draw (-2, 1.2) -- (-2, 1.8);
					\node at (-2.3, 1.5) {$(e_1)$};
					\draw (-2, 1.2) -- (0, 1.8);
					\node at (-1.3, 1.6) {$(e_1)$};
					\draw (-2, 2.2) -- (0, 2.8);
					\node at (-1, 2.7) {$(e_1)$};
					\draw (0, 2.2) -- (0, 2.8);
					\node at (-.3, 2.4) {$(e_1)$};
					\draw (2, 2.2) -- (0, 2.8);
					\node at (1.5, 2.6) {$(e_2)$};
					\node at (1, 3) {$(e_3)$};
				\end{tikzpicture}
			\end{center}
		\end{example}
		
		Till now we have discussed three different types of hypergraphs and their corresponding quantum states. Now we concentrate on their entanglement properties. 		
		\begin{lemma}\label{only_one_P}
			Let $P$ be the permutation which maps the qubits in the cut set $T$ to $\mathcal{I}$. Given a hypergraph $G$ with $n$ vertices and exactly one hyperedge that contains all the vertices, we have $P\ket{G} = \ket{G}$.
		\end{lemma}
		
		\begin{proof}
			Recall from corollary \ref{only_one} that $\ket{G} = \frac{1}{\sqrt{2^n}} \left[\sum_{i = 0}^{2^n - 2} \ket{\bin(i)} - \ket{11 \dots 1} \right]$. Note that, $P\ket{11 \dots 1} = \ket{11 \dots 1}$ for all permutation matrix $P$. Also, $P$ alters the elements of $\ket{\bin(i)}$ for $0 \leq i  \leq (2^n - 2)$ from one to another keeping their coefficients fixed. Therefore, $P\ket{G} = \ket{G}$.
		\end{proof}
		
		\begin{lemma}\label{completek_P}
			Let $P$ be the permutation which maps the qubits in the cut set $T$ to $\mathcal{I}$. Given a complete $k$-graph $G$ with $n$ vertices, we have $P\ket{G} = \ket{G}$.
		\end{lemma}
		
		\begin{proof}
			The hypergraph state corresponding to complete $k$-graph is discussed in Corollary \ref{complete_k_graph}. Expanding it we have
			\begin{equation}
			\ket{G} = \frac{1}{\sqrt{2^n}}\left[\sum_{i \in C_0\cup \dots C_{k-1}}\bin(i) - \sum_{i \in C_k}\bin(i) + (-1)^{k + 1 \choose k} \sum_{i \in C_{k + 1}} \bin(i) + \dots + (-1)^{n \choose k} \sum_{i \in C_n} \bin(i)\right]
			\end{equation}
			Recall that, for all clusters $C_s$, we have $|\mathcal{O}(i)| = |\mathcal{O}(j)|$ for any two $i$ and $j \in C_s$. Therefore, $\bin(i)$ and $\bin(j)$ have equal number of $1$s. The permutation $P$ does not alter the number of $1$s. It only replaces their respective positions. Hence, for any $i \in C_s$ there is $j$ in $C_s$ such that $P\ket{\bin(i)} = \ket{\bin(j)}$ for all $s$. Also, $f(\bin(i))$ has equal value for all $i \in C_s$ for a fixed $s$. Thus, we get $P\ket{G} = \ket{G}$. 
		\end{proof}
		
		In a similar fashion we can prove the following corollary:
		\begin{corollary}\label{mixed_k_P}
			Let $G$ be a hypergraph, such that the set of hyperedges $E(G) = E_1 \cup E_2 \cup \dots \cup E_p$ where the hypergraph $(V(G), E_s)$ is a complete $k_s$-graph for distinct values of $s = 1, 2, \dots p$ and $0 < k_1 < k_2 \dots < k_p$. Let $P$ be the permutation which maps the qubits in the set $T$ to $\mathcal{I}$. Then $P\ket{G} = \ket{G}$.
		\end{corollary}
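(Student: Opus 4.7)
The plan is to reduce the statement to the key observation that the Boolean function $f$ determining $\ket{G}$ depends on $\bin(i)$ only through its Hamming weight $|\mathcal{O}(i)|$. This mirrors the proof of the preceding lemma for a single complete $k$-graph, and the corollary about unions of complete $k_s$-graphs (stated earlier in the paper) already gives us the explicit formula
\begin{equation*}
f(\bin(i)) = \sum_{s\,:\,k_s \le |\mathcal{O}(i)|} \binom{|\mathcal{O}(i)|}{k_s} \pmod{2},
\end{equation*}
which is manifestly a function of $|\mathcal{O}(i)|$ alone.

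First, I would use this to regroup the expansion of $\ket{G}$ by the clusters $C_s = \{i : |\mathcal{O}(i)| = s\}$: writing $f_s$ for the common value of $f$ on $C_s$, we obtain
\begin{equation*}
\ket{G} = \frac{1}{\sqrt{2^n}} \sum_{s=0}^{n} (-1)^{f_s} \sum_{i \in C_s} \ket{\bin(i)}.
\end{equation*}
Second, I would analyse the action of $P$ on these cluster sums. Since $P$ is a permutation of tensor factors, $P\ket{\bin(i)} = \ket{\bin(i')}$ where $\bin(i')$ is a rearrangement of the coordinates of $\bin(i)$; in particular $|\mathcal{O}(i')| = |\mathcal{O}(i)|$, so $P$ restricts to a bijection of $C_s$ onto itself for every $s$. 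Consequently, $P \sum_{i \in C_s} \ket{\bin(i)} = \sum_{i \in C_s} \ket{\bin(i)}$ for each $s$, and summing over $s$ yields $P\ket{G} = \ket{G}$.

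There is no real obstacle here; the proof is essentially a bookkeeping argument built on top of the earlier corollary. The one subtle point worth spelling out is why $P$ sends each $C_s$ into itself: this is immediate from the fact that $P$ only permutes the positions of the bits in $\bin(i)$ and therefore preserves the number of $1$'s, i.e.\ the size of $\mathcal{O}(i)$. Once this is observed, linearity finishes the proof, exactly as in the complete $k$-graph case.
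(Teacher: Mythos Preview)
Your proof is correct and follows essentially the same route as the paper: both use the earlier corollary to see that the coefficients of $\ket{\bin(i)}$ in $\ket{G}$ are constant on each cluster $C_s$, observe that $P$ preserves the number of $1$'s and hence fixes each sum $\sum_{i\in C_s}\ket{\bin(i)}$, and conclude $P\ket{G}=\ket{G}$. Your write-up is somewhat more explicit (displaying the formula for $f$ and naming the bijection of $C_s$), but the argument is the same.
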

		
		In the above results, we mention three classes of hypergraphs $G$ for which $P\ket{G} = \ket{G}$. The following theorem states that they are the only hypergraphs satisfying $P\ket{G} = \ket{G}$.
				
		\begin{theorem}
			Let $G$ be a hypergraph, such that $P\ket{G} = \ket{G}$, where $P$ is a permutation that maps the qubits in a set $T$ to $\mathcal{I}$. Then, we can express $E(G)$ as a union of the hyperedges of a number of complete $k$-graphs for distinct values of $k$.
		\end{theorem}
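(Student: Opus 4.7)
The plan is to use the bijection of Theorem 1 between hypergraphs and hypergraph states to upgrade the given symmetry of $\ket{G}$ to a symmetry of $G$ itself, and then observe that an $S_n$-invariant edge set must be a disjoint union of uniform-cardinality orbits, each of which is a complete $k$-graph.

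First I would note that as $T$ ranges over subsets of $\{0, 1, \dots, n-1\}$ and $P$ ranges over all permutations taking $T$ to $\mathcal{I}$ in the sense of the preceding lemma, one obtains every permutation in $S_n$: given $\pi \in S_n$, setting $T = \pi^{-1}(\mathcal{I})$ with $|\mathcal{I}| = |T|$ realises $\pi$ as such a $P$. Hence the hypothesis is equivalent to $\ket{G}$ being invariant under every qubit permutation. For a vertex permutation $\sigma$ acting on $V(G)$, let $P_\sigma$ denote the induced permutation of qubits in $\mathbb{C}_2^{\otimes n}$. The key step is the conjugation identity
\[
P_\sigma Z_e P_\sigma^{-1} = Z_{\sigma(e)} \qquad \text{for every hyperedge } e \in E(G),
\]
which follows directly from the tensor-product description of $Z_e$ given in Lemma 2: $P_\sigma$ simply relabels which tensor factors carry the non-trivial diagonal block. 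Combining this with the obvious symmetry $P_\sigma \ket{+}^{\otimes n} = \ket{+}^{\otimes n}$ yields
\[
P_\sigma \ket{G} = P_\sigma \left(\prod_{e \in E(G)} Z_e\right) \ket{+}^{\otimes n} = \left(\prod_{e \in E(G)} Z_{\sigma(e)}\right) \ket{+}^{\otimes n} = \ket{\sigma(G)},
\]
where $\sigma(G)$ denotes the hypergraph on $V(G)$ with edge set $\{\sigma(e) : e \in E(G)\}$. Together with the hypothesis $P_\sigma \ket{G} = \ket{G}$, the bijection in Theorem 1 forces $\sigma(G) = G$, so $E(G)$ is invariant under every vertex permutation.

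Finally, the orbits of $S_n$ acting on the power set of $V(G)$ are precisely the sets of $k$-subsets of $V(G)$ for $k = 0, 1, \dots, n$. An $S_n$-invariant $E(G)$ is therefore a disjoint union of such orbits, and each orbit is by definition the edge set of a complete $k$-graph on $V(G)$, which gives the desired decomposition. The main obstacle is the conjugation identity $P_\sigma Z_e P_\sigma^{-1} = Z_{\sigma(e)}$; once it is in hand, everything else is quantifier manipulation and a standard orbit decomposition. The identity itself is not deep, but does require carefully unpacking the multi-qubit controlled-$Z$ structure from Lemma 2 and tracking how a qubit permutation relabels the active tensor factors.
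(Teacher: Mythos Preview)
Your argument is correct and takes a genuinely different route from the paper's own proof. The paper works entirely at the level of coefficients: since every qubit permutation $P$ fixes $\ket{G}$ and $P$ maps each basis vector $\ket{\bin(i)}$ to some $\ket{\bin(j)}$ with $i,j$ in the same Hamming-weight class $C_s$, the sign $(-1)^{f(\bin(i))}$ must be constant on each $C_s$; the paper then asserts (rather tersely) that this forces $E(G)$ to contain, for each cardinality $k$ that occurs, all hyperedges of that cardinality. You instead lift the symmetry from $\ket{G}$ to $G$ itself via the conjugation identity $P_\sigma Z_e P_\sigma^{-1} = Z_{\sigma(e)}$ together with the bijection of Theorem~1, obtaining $\sigma(G)=G$ for every $\sigma\in S_n$ and then reading off the conclusion from the orbit decomposition of the power set under $S_n$.

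What each approach buys: the paper's route is shorter in spirit but leaves the last implication (``constant sign on each $C_s$ $\Rightarrow$ $E(G)$ is a union of complete $k$-graphs'') unjustified; filling this gap honestly would require something like Theorem~\ref{counting_1} plus a M\"obius-inversion argument on the lattice $([2^n],\subset)$. Your route avoids that analytic step entirely by transporting the symmetry to the combinatorial side first, at the cost of the extra (but easy) conjugation lemma. Your proof is the more self-contained of the two.
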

		
		\begin{proof}
			As $P\ket{G} = \ket{G}$, the operation $P$ keeps coefficients $\ket{\bin(i)}$ unaltered for all $i$. It also keeps a number of $1$s unchanged to map the qubits in a set $T$ to $\mathcal{I}$. Therefore, for all $i \in C_s$ there is $j \in C_s$ such that $P\ket{\bin(i)} = \ket{\bin(j)}$ for all $s$. To maintain an equal sign for all $\ket{\bin(i)}$ with $i \in C_s$, we need all the sets of hyperedges with an equal number of vertices. Therefore, we can express $E(G)$ as a union of the hyperedges of a number of complete $k$-graphs for distinct values of $k$.
		\end{proof}
		
		In the above theorem, when $k = n$, the hypergraph consists of a single hyperedge containing all the edges. It gives us the hypergraph mentioned in Corollary \ref{only_one} and Lemma \ref{only_one_P}. Complete $k$ graph is discussed in Corollary \ref{complete_k_graph} and Lemma \ref{completek_P}. Also, corollary \ref{complete_k_mixed} and \ref{mixed_k_P} considers the union of complete $k$-graphs. An interesting characteristic of these states is that two cuts of equal lengths have an equal amount of entanglement. We conclude it from the theorem below.
		
		\begin{theorem}\label{equivalance_of_partial_transpose}
			Let $G$ be a hypergraph with $n$ vertices, such that, $P\ket{G} = \ket{G}$, where $P: T \rightarrow \mathcal{I}$ is a permutation. Given any cut $T$ of length $m$, all the partial transposes of $\ket{G}$, $\rho^{\tau_T}_G$, have equal sets of eigenvalues.
		\end{theorem}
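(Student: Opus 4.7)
The plan is to reduce two arbitrary cuts of the same length $m$ to one another by means of a qubit permutation that fixes $\ket{G}$, and then to exploit the fact that partial transposition is covariant under qubit permutations.

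First I would pick two arbitrary cut sets $T_1$ and $T_2$, both of length $m$. By hypothesis, there is a permutation $P_1$ mapping the qubits in $T_1$ to $\mathcal{I}$ and a permutation $P_2$ mapping the qubits in $T_2$ to $\mathcal{I}$, and both satisfy $P_1 \ket{G} = \ket{G}$ and $P_2 \ket{G} = \ket{G}$. Defining $Q = P_2^{-1} P_1$, the permutation $Q$ carries $T_1$ setwise onto $T_2$, and
\begin{equation}
Q \ket{G} = P_2^{-1} P_1 \ket{G} = P_2^{-1} \ket{G} = \ket{G},
\end{equation}
so $Q \rho_G Q^{-1} = \rho_G$.

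The second ingredient is the standard covariance identity: for any density matrix $\sigma$ on $n$ qubits, any qubit permutation $R$, and any cut $S$,
\begin{equation}
\bigl(R\sigma R^{-1}\bigr)^{\tau_S} = R\, \sigma^{\tau_{R^{-1}(S)}}\, R^{-1},
\end{equation}
which one can read off from equation (\ref{partial transpose}) by tracking how $R$ relabels the tensor factors on which transposition acts. Applying this identity with $R = Q$, $S = T_2$, and $\sigma = \rho_G$, and using $Q^{-1}(T_2) = T_1$, we obtain
\begin{equation}
\rho_G^{\tau_{T_2}} = \bigl(Q \rho_G Q^{-1}\bigr)^{\tau_{T_2}} = Q\, \rho_G^{\tau_{T_1}}\, Q^{-1}.
\end{equation}

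Since $Q$ is a permutation matrix, hence unitary, $\rho_G^{\tau_{T_1}}$ and $\rho_G^{\tau_{T_2}}$ are unitarily similar and therefore share the same spectrum; as $T_1$ and $T_2$ were arbitrary cuts of length $m$, this establishes the theorem. The main obstacle I anticipate is the careful bookkeeping behind the covariance identity, where one has to be precise about which qubit positions are being permuted and which are being transposed, but once a single product-basis expansion is written out explicitly the identity is immediate from the definition of partial transposition.
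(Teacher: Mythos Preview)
Your proposal is correct and follows essentially the same strategy as the paper: both arguments use the covariance of partial transposition under qubit permutations together with the hypothesis $P\ket{G}=\ket{G}$ to exhibit a unitary (permutation) similarity between the two partial transposes. The only cosmetic difference is that the paper compares an arbitrary cut $T$ directly to the fixed reference cut $\mathcal{I}$ via a single permutation $P$, whereas you compare two arbitrary cuts $T_1,T_2$ by composing through $\mathcal{I}$ with $Q=P_2^{-1}P_1$; the underlying mechanism and the key covariance identity are identical.
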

		
		\begin{proof}
			Let $\rho^{\tau_\mathcal{I}}_G$ and $\rho^{\tau_T}_G$ be the partial transposes of density matrix $\rho_G = \ket{G}\bra{G}$ with respect to the cut set $\mathcal{I} = \{0, 1, \dots (m-1)\}$, and $T =\{k_0, k_1, \dots k_{(m-1)}\}$, respectively. We need to prove $\rho^{\tau_\mathcal{I}}_G$ and $\rho^{\tau_T}_G$ share equal set of eigenvalues for any cut $T$ of length $m$. There is a permutation matrix $P$ such that $P\rho_GP^t$ is the density matrix of a state where $k_1, \dots k_m$ qubits $\rho_G$ act as $1, \dots m$ qubits. Let $\rho_1$ be the partial transpose on $P\rho_GP^t$ with respect to the cut set $\mathcal{I}$. Therefore, $P^t\rho_1P$ is the partial transpose on $\rho_G$ with respect to the cut set $T$. Symbolically, $\rho^{\tau_T}_G = P^t\rho_1P$. We consider that $P\ket{G} = \ket{G}$, that is $P\rho_G P^t = \rho_G$. Therefore, $\rho_1$ is partial transpose on $\rho_G$ with respect to $\mathcal{I}$. Symbolically, $\rho_1 = \rho^{\tau_{\mathcal{I}}}_G$. Combining all these we get, $\rho^{\tau_T}_G = P^t\rho^{\tau_{\mathcal{I}}}P$, for some unitary matrix $P$. Therefore, $\rho^{\tau_I}_G$ and $\rho^{\tau_J}_G$ have same set of eigenvalues.
		\end{proof}
		
		In \cite{brown2005searching}, a measures of entanglement is proposed based on the partial transpose. It considers the negated sum of all negative eigenvalues of $\rho^{\tau_T}$. The above theorem suggests that for all cuts of equal lengths entanglement is equal for those hypergraphs $G$ with $P\ket{G} = \ket{G}$.

	\section{Conclusion}
			
		Boolean algebra is extensively applied to quantum computation and information. This article initiates a new direction of research which involves combinatorial tools to analyze the properties of quantum states in terms of Boolean functions. We elaborate on a connection between the Boolean functions and the hypergraph states. Using this connection we have discussed the relations between the coefficients of the hypergraph state and the structure of underlined hypergraph for a number of hypergraph classes, such as the complete $k$ graph and their generalizations. We have identified different classes of hypergraphs whose corresponding quantum states have equal entanglement for cuts with equal lengths. The connection between the Boolean functions and hypergraph states is interesting due to the application of Boolean functions in classical computation and cryptography. Hence, every classical cryptographic protocol based on Boolean function might have a counterpart in quantum information theory via hypergraph states. The interested reader may approach further in this direction.

	\section*{Acknowledgment}
	
		The author is thankful to Prof. Prasanta K. Panigrahi for a number of discussions during this work.


\begin{thebibliography}{10}
	
	\bibitem{bennett1992communication}
	Charles~H Bennett and Stephen~J Wiesner.
	\newblock Communication via one-and two-particle operators on
	einstein-podolsky-rosen states.
	\newblock {\em Physical review letters}, 69(20):2881, 1992.
	
	\bibitem{nielsen2002quantum}
	Michael~A Nielsen and Isaac Chuang.
	\newblock Quantum computation and quantum information, 2002.
	
	\bibitem{bennett1993teleporting}
	Charles~H Bennett, Gilles Brassard, Claude Cr{\'e}peau, Richard Jozsa, Asher
	Peres, and William~K Wootters.
	\newblock Teleporting an unknown quantum state via dual classical and
	einstein-podolsky-rosen channels.
	\newblock {\em Physical review letters}, 70(13):1895, 1993.
	
	\bibitem{devitt2013quantum}
	Simon~J Devitt, William~J Munro, and Kae Nemoto.
	\newblock \href{https://arxiv.org/abs/0905.2794}{Quantum error correction for
		beginners}.
	\newblock {\em Reports on Progress in Physics}, 76(7):076001, 2013.
	
	\bibitem{mcmahon2007quantum}
	David McMahon.
	\newblock {\em Quantum computing explained}.
	\newblock John Wiley \& Sons, 2007.
	
	\bibitem{wilde2013quantum}
	Mark~M Wilde.
	\newblock {\em \href{https://arxiv.org/abs/1106.1445}{Quantum information
			theory}}.
	\newblock Cambridge University Press, 2013.
	
	\bibitem{benenti2007principles}
	Giuliano Benenti, Giulio Casati, and Giuliano Strini.
	\newblock {\em Principles of Quantum Computation and Information}.
	\newblock World Scientific Publishing Company, 2007.
	
	\bibitem{horodecki2009quantum}
	Ryszard Horodecki, Pawe{\l} Horodecki, Micha{\l} Horodecki, and Karol
	Horodecki.
	\newblock \href{https://arxiv.org/abs/quant-ph/0702225}{Quantum entanglement}.
	\newblock {\em Reviews of modern physics}, 81(2):865, 2009.
	
	\bibitem{qu2013encoding}
	Ri~Qu, Juan Wang, Zong-shang Li, and Yan-ru Bao.
	\newblock \href{https://arxiv.org/abs/1211.3911}{Encoding hypergraphs into
		quantum states}.
	\newblock {\em Physical Review A}, 87(2):022311, 2013.
	
	\bibitem{rossi2013quantum}
	Matteo Rossi, Marcus Huber, Dagmar Bru{\ss}, and Chiara Macchiavello.
	\newblock \href{https://arxiv.org/abs/1211.5554}{Quantum hypergraph states}.
	\newblock {\em New Journal of Physics}, 15(11):113022, 2013.
	
	\bibitem{hein2004multiparty}
	Marc Hein, Jens Eisert, and Hans~J Briegel.
	\newblock \href{https://arxiv.org/abs/quant-ph/0307130}{Multiparty entanglement
		in graph states}.
	\newblock {\em Physical Review A}, 69(6):062311, 2004.
	
	\bibitem{anders2006fast}
	Simon Anders and Hans~J Briegel.
	\newblock \href{https://arxiv.org/abs/quant-ph/0504117}{Fast simulation of
		stabilizer circuits using a graph-state representation}.
	\newblock {\em Physical Review A}, 73(2):022334, 2006.
	
	\bibitem{van2005local}
	Maarten Van~den Nest, Jeroen Dehaene, and Bart De~Moor.
	\newblock \href{https://arxiv.org/abs/quant-ph/0411115}{Local unitary versus
		local Clifford equivalence of stabilizer states}.
	\newblock {\em Physical Review A}, 71(6):062323, 2005.
	
	\bibitem{briegel2001persistent}
	Hans~J Briegel and Robert Raussendorf.
	\newblock \href{https://arxiv.org/abs/quant-ph/0004051}{Persistent entanglement
		in arrays of interacting particles}.
	\newblock {\em Physical Review Letters}, 86(5):910, 2001.
	
	\bibitem{nielsen2006cluster}
	Michael~A Nielsen.
	\newblock \href{https://arxiv.org/abs/quant-ph/0504097v2}{Cluster-state quantum
		computation}.
	\newblock {\em Reports on Mathematical Physics}, 57(1):147--161, 2006.
	
	\bibitem{qu2013multipartite}
	Ri~Qu, Zong-shang Li, Juan Wang, and Yan-ru Bao.
	\newblock \href{https://arxiv.org/abs/1301.3576}{Multipartite entanglement and
		hypergraph states of three qubits}.
	\newblock {\em Physical Review A}, 87(3):032329, 2013.
	
	\bibitem{wagner2018analysis}
	Thomas Wagner, Hermann Kampermann, and Dagmar Bru{\ss}.
	\newblock \href{https://arxiv.org/abs/1711.00295}{Analysis of quantum error
		correction with symmetric hypergraph states}.
	\newblock {\em Journal of Physics A: Mathematical and Theoretical},
	51(12):125302, 2018.
	
	\bibitem{dutta2019permutation}
	Supriyo Dutta, Ramita Sarkar, and Prasanta~K Panigrahi.
	\newblock Permutation symmetric hypergraph states and multipartite quantum
	entanglement.
	\newblock {\em International Journal of Theoretical Physics},
	58(11):3927--3944, 2019.
	
	\bibitem{zhou2022entanglement}
	You Zhou and Alioscia Hamma.
	\newblock \href{https://arxiv.org/abs/2110.07158}{Entanglement of random
		hypergraph states}.
	\newblock {\em Physical Review A}, 106(1):012410, 2022.
	
	\bibitem{amouzou2022entanglement}
	Gr{\^a}ce Amouzou, Jeoffrey Boffelli, Hamza Jaffali, Kossi Atchonouglo, and
	Fr{\'e}d{\'e}ric Holweck.
	\newblock \href{https://arxiv.org/abs/2010.03217}{Entanglement and nonlocality
		of four-qubit connected hypergraph states}.
	\newblock {\em International Journal of Quantum Information}, 20(03):2250001,
	2022.
	
	\bibitem{sarkar2022geometry}
	Ramita Sarkar, Shreya Banerjee, Subhasish Bag, and Prasanta~K Panigrahi.
	\newblock Geometry of distributive multiparty entanglement in 4- qubit
	hypergraph states.
	\newblock {\em IET Quantum Communication}, 3(1):72--84, 2022.
	
	\bibitem{moore2019quantum}
	Darren~W Moore.
	\newblock \href{https://arxiv.org/abs/1909.03871}{Quantum hypergraph states in
		continuous variables}.
	\newblock {\em Physical Review A}, 100(6):062301, 2019.
	
	\bibitem{ghio2017multipartite}
	Maddalena Ghio, Daniele Malpetti, Matteo Rossi, Dagmar Bru{\ss}, and Chiara
	Macchiavello.
	\newblock \href{https://arxiv.org/abs/1703.00429v2}{Multipartite entanglement
		detection for hypergraph states}.
	\newblock {\em Journal of Physics A: Mathematical and Theoretical},
	51(4):045302, 2017.
	
	\bibitem{qu2013relationship}
	Ri~Qu, Yi-ping Ma, Bo~Wang, and Yan-ru Bao.
	\newblock \href{https://arxiv.org/abs/1304.6275}{Relationship among locally
		maximally entangleable states, W states, and hypergraph states under local
		unitary transformations}.
	\newblock {\em Physical Review A}, 87(5):052331, 2013.
	
	\bibitem{qu2014entropic}
	Ri~Qu, Yi-ping Ma, Yan-ru Bao, Juan Wang, and Zong-shang Li.
	\newblock \href{https://arxiv.org/abs/1305.0662}{Entropic measure and
		hypergraph states}.
	\newblock {\em Quantum information processing}, 13(2):249--258, 2014.
	
	\bibitem{guhne2014entanglement}
	Otfried G{\"u}hne, Marti Cuquet, Frank~ES Steinhoff, Tobias Moroder, Matteo
	Rossi, Dagmar Bru{\ss}, Barbara Kraus, and Chiara Macchiavello.
	\newblock \href{https://arxiv.org/abs/1404.6492}{Entanglement and nonclassical
		properties of hypergraph states}.
	\newblock {\em Journal of Physics A: Mathematical and Theoretical},
	47(33):335303, 2014.
	
	\bibitem{zhu2019efficient}
	Huangjun Zhu and Masahito Hayashi.
	\newblock \href{arxiv.org/abs/1806.05565}{Efficient verification of hypergraph
		states}.
	\newblock {\em Physical Review Applied}, 12(5):054047, 2019.
	
	\bibitem{xiong2018qudit}
	Fei-Lei Xiong, Yi-Zheng Zhen, Wen-Fei Cao, Kai Chen, and Zeng-Bing Chen.
	\newblock \href{https://arxiv.org/abs/1701.07733}{Qudit hypergraph states and
		their properties}.
	\newblock {\em Physical Review A}, 97(1):012323, 2018.
	
	\bibitem{toth2005detecting}
	G{\'e}za T{\'o}th and Otfried G{\"u}hne.
	\newblock \href{https://arxiv.org/abs/quant-ph/0405165}{Detecting genuine
		multipartite entanglement with two local measurements}.
	\newblock {\em Physical review letters}, 94(6):060501, 2005.
	
	\bibitem{takeuchi2019quantum}
	Yuki Takeuchi, Tomoyuki Morimae, and Masahito Hayashi.
	\newblock \href{https://arxiv.org/abs/1809.07552}{Quantum computational
		universality of hypergraph states with Pauli-X and Z basis measurements}.
	\newblock {\em Scientific reports}, 9(1):1--14, 2019.
	
	\bibitem{takeuchi2018verification}
	Yuki Takeuchi and Tomoyuki Morimae.
	\newblock \href{https://arxiv.org/abs/1709.07575}{Verification of many-qubit
		states}.
	\newblock {\em Physical Review X}, 8(2):021060, 2018.
	
	\bibitem{gachechiladze2019changing}
	Mariami Gachechiladze, Otfried G{\"u}hne, and Akimasa Miyake.
	\newblock \href{https://arxiv.org/abs/1805.12093}{Changing the circuit-depth
		complexity of measurement-based quantum computation with hypergraph states}.
	\newblock {\em Physical Review A}, 99(5):052304, 2019.
	
	\bibitem{tao2022verification}
	Hong Tao, Xiaoqian Zhang, Lei Shao, and Xiaoqing Tan.
	\newblock \href{https://arxiv.org/abs/2203.09989}{Verification of colorable
		hypergraph states with stabilizer test}.
	\newblock {\em arXiv preprint arXiv:2203.09989}, 2022.
	
	\bibitem{banerjee2020quantum}
	Shreya Banerjee, Arghya Mukherjee, and Prasanta~K Panigrahi.
	\newblock Quantum blockchain using weighted hypergraph states.
	\newblock {\em Physical Review Research}, 2(1):013322, 2020.
	
	\bibitem{galindo2002information}
	Alberto Galindo and Miguel~Angelo Martin-Delgado.
	\newblock \href{https://arxiv.org/abs/quant-ph/0112105}{Information and
		computation: Classical and quantum aspects}.
	\newblock {\em Reviews of Modern Physics}, 74(2):347, 2002.
	
	\bibitem{crawley1973algebraic}
	Peter Crawley and Robert~Palmer Dilworth.
	\newblock {\em Algebraic theory of lattices}.
	\newblock Prentice Hall, 1973.
	
	\bibitem{lidl2012applied}
	Rudolf Lidl and G{\"u}nter Pilz.
	\newblock {\em Applied abstract algebra}.
	\newblock Springer Science \& Business Media, 2012.
	
	\bibitem{bretto2013hypergraph}
	Alain Bretto.
	\newblock Hypergraph theory.
	\newblock {\em An introduction. Mathematical Engineering. Cham: Springer},
	2013.
	
	\bibitem{crama2011boolean}
	Yves Crama and Peter~L Hammer.
	\newblock {\em Boolean functions: Theory, algorithms, and applications}.
	\newblock Cambridge University Press, 2011.
	
	\bibitem{brown2005searching}
	Iain~DK Brown, Susan Stepney, Anthony Sudbery, and Samuel~L Braunstein.
	\newblock Searching for highly entangled multi-qubit states.
	\newblock {\em Journal of Physics A: Mathematical and General}, 38(5):1119,
	2005.
	
\end{thebibliography}

\end{document}